\documentclass[a4paper,12pt,reqno]{amsart}

%%% Put any required packages here %%%%%%%%%%%%%%%%%%%%%%%%%%%%%%%%%%%%%
% For example, if you need to include pictures in document, use
 \usepackage{amssymb}
%%%%%%%%%%%%%%%%%%%%%%%%%%%%%%%%%%%%%%%%%%%%%%%%%%%%%%%%%%%%%%%%%%%%%%%%

%%% Setting page layout %%%%%%%%%%%%%%%%%%%%%%%%%%%%%%%%%%%%%%%%%%%%%%%%
\usepackage[text={130mm,200mm}]{geometry}
%% \pagestyle{empty}
%%%%%%%%%%%%%%%%%%%%%%%%%%%%%%%%%%%%%%%%%%%%%%%%%%%%%%%%%%%%%%%%%%%%%%%%

%%% Theorem-like environments' definitions %%%%%%%%%%%%%%%%%%%%%%%%%%%%%
\theoremstyle{plain}
\newtheorem{theorem}{Theorem}
\newtheorem*{theorem*}{Theorem}

\newtheorem*{corollary*}{Corollary}

\newtheorem*{lemma*}{Lemma}
\newtheorem{proposition}{Proposition}
\newtheorem*{proposition*}{Proposition}

\newtheorem*{conjecture*}{Conjecture}
\theoremstyle{definition}

\newtheorem*{definition*}{Definition}
\theoremstyle{remark}

\newtheorem*{remark*}{Remark}
%%%%%%%%%%%%%%%%%%%%%%%%%%%%%%%%%%%%%%%%%%%%%%%%%%%%%%%%%%%%%%%%%%%%%%%%

%%% Put your local definitions here %%%%%%%%%%%%%%%%%%%%%%%%%%%%%%%%%%%%
% For example,

% \newcommand{\norm}[2][]{\left\lVert#2\right\rVert_{#1}}
%%%%%%%%%%%%%%%%%%%%%%%%%%%%%%%%%%%%%%%%%%%%%%%%%%%%%%%%%%%%%%%%%%%%%%%%

\begin{document}

\title[Fractal properties of the Minkowski function]{On one fractal property of the Minkowski function}
\author{Symon Serbenyuk}
\address{Institute of Mathematics \\
 National Academy of Sciences of Ukraine \\
  3~Tereschenkivska St. \\
  Kyiv  \\
  01004 \\
  Ukraine}
\email{simon6@ukr.net;  simon.mathscience@imath.kiev.ua}

\subjclass[2010]{28A80,  26A30, 11K55,  03E99, 11K50}

% Key words
\keywords{Function with complicated local structure, singular function, fractal, self-similar set, Hausdorff-Besicovitch dimension, Minkowski function.}

\begin{abstract}

The article is devoted to answer the question about preserving the Hausdorff-Besicovitch dimension by the singular Minkowski function. It is proved that the function is not the DP-transformation, i.e., the Minkowski function does not preserve the   Hausdorff-Besicovitch dimension.
 
\end{abstract}

\maketitle

It is well-known that the main problem of the fractals theory is a problem of calculating of dimension. In particular, the dimension is the Hausdorff-Besicovitch dimension (a fractal  dimension). But, since some classes of sets have complicated determination, a calculation of a value of the dimension is a difficult and labour-consuming problem for these sets. Therefore, to simplify of fractal dimension calculation, a problem of searching of auxiliary facilities appears. Transformations preserving the Hausdorff-Besicovitch dimension (DP-transformations) are such facilities. The transformations help to simplify of sets determinations and to investigate of belonging to the class of DP-tranformations of other transformations.

 Monotone singular distribution functions as transformations of the segment $[0;1]$ are very interesting  for studying of DP-transformations. The present article is devoted to considering of fractal properties of one example of  such functions. 
In the article a preserving of  the Hausdorff-Besicovitch dimension by the Minkowski function is investigated.  Results of the present article  were presented by the author of the article on Second Interuniversity Scientific Conference on Mathematics and Physics for Young Scientists in April, 2011 \cite{S. Serbenyuk abstract 1}.

The following function
$$
G(x)=2^{1-a_1}-2^{1-(a_1+a_2)}+...+(-1)^{n-1}2^{1-(a_1+a_2+...+a_n)}+...,
$$
is called \emph{the Minkowski function}. An argument $x$ of the function  determined in terms of representation of real numbers by continued fractions, i. e., 
$$%\begin{equation}
% \label{eq:cfrac}
  [0;1]\ni x=  \cfrac{1}{a_1 + \cfrac{1}{a_2 +  \cfrac{1}{a_3 + \ldots}}}\equiv [0; a_1, a_2,...,a_n,...], ~a_n\in\mathbb N.
$$%\end{equation}
 To establish of the one-to-one correspondence between rational numbers and quadratic irrationalities,  the function was introduced by Minkowski in \cite{{Minkowski1911}}. A difficulty of  investigation of  preserving the Hausdorff-Besicovitch dimension by the Minkowski function is caused by singularity and a complexity of  the argument determination of the function.

Let us consider the following set 
$$
E_9\equiv\left\{ x: x=[0;a_1,a_2,...,a_i,...], a_i \in\{1, 2, 3, ..., 9\}~  \forall{ i\in \mathbb N}\right\}.
$$

"The geometry of continued fractions" does not have a property of "classical self-similarity". It is the reason of rather greater difficulties for  obtaining  exact results  of fractal properties of continued fractions sets.  

%Значні труднощі в одержанні точних результатів фрактальних властивостей різних множин ланцюгових дробів 
%пов'язані з тим, що "геометрії ланцюгових дробів" не притаманна"класична самоподібність".

The main researches, in which  fractal properies of some types of sets of continued fractions are studied, there are the articles of Jarnik \cite{Jarnik1929} (the sets $E_n$, that  representation of its elements by continued fractions  contains symbols that do not exceed $n$), Good \cite{Good1941} (the sets of continued fractions, whose  elements $a_n(x)$ quickly tend to infinity), Hirst \cite{Hirst1970} (the sets of such continued fractions, whose  elements belong to infinite sequence of positive integers and increase indefinitely) and in   \cite{Hensley1992} Hensley  clarified estimations for Hausdorff-Besicovitch dimension of $E_n$ such that were obtained by Jarnik and Good:
$$
1-\frac{1}{n\lg{2}}\le\alpha_0(E_n)\le1-\frac{1}{8n\lg{n}}, n>8.
$$

Whence, for $ E_9$  we obtain that
\begin{equation}
\label{eq: estimations1}
0,6308969 \le\alpha_0(E_9)\le 0,985445112.
\end{equation}

The   answer to the main question of this article follows from the next statement about a value of the Hausdorff-Besicovitch dimension of the set $G(E_9)$. 
\begin{theorem}
  \label{thm}
A value of the Hausdorff-Besicovitch dimension of the following set
$$
\left\{y: y=G(x)=\frac{2}{2^{a_1}}-\frac{2}{2^{a_1+a_2}}+...+\frac{2(-1)^{n-1}}{2^{a_1+a_2+...+a_n}}+..., x=[0;a_1,a_2,...,a_n,...]\right\}, 
$$
where $\forall{i \in \mathbb N}:   a_i \in \{{ k_1}, {k_2}, ..., {k_S} \}$   and $\{{ k_1}, {k_2}, ..., {k_S} \}$ is a fixed tuple of positive integers for a fixed positive integer number  $S>1$, 
can be calculate by the formula: 
$$
\left({\frac{1}{2^{k_1}}}\right)^{\alpha_0}+\left({\frac{1}{2^{k_2}}}\right)^{\alpha_0}+\left({\frac{1}{2^{k_3}}}\right)^{\alpha_0}+...+\left({\frac{1}{2^{k_S}}}\right)^{\alpha_0}=1.
$$
\end{theorem}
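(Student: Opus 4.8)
The plan is to recognize the set in question---write $M:=G(E)$ with $E=\{[0;a_1,a_2,\ldots]\colon a_i\in\{k_1,\ldots,k_S\}\ \forall i\}$---as the attractor of an iterated function system of $S$ contracting similarities of $\mathbb{R}$ whose contraction ratios are exactly $2^{-k_1},\ldots,2^{-k_S}$ and which satisfies the open set condition; once this is established, Moran's theorem (the dimension of a self-similar set satisfying the open set condition is its Moran exponent) gives the Hausdorff--Besicovitch dimension of $M$ as the unique root of $\sum_{j=1}^{S}\bigl(2^{-k_j}\bigr)^{\alpha}=1$, which is precisely the asserted formula.

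The heart of the matter is a conjugacy, namely the functional equation of the Minkowski function. Fix $k\in\mathbb{N}$ and let $x=[0;a_2,a_3,\ldots]$, so that $T_k(x):=\frac{1}{k+x}=[0;k,a_2,a_3,\ldots]$. Substituting $[0;k,a_2,a_3,\ldots]$ into the series defining $G$ and pulling the common factor $2^{1-k}=\frac{2}{2^{k}}$ out of every term gives
$$G\bigl(T_k(x)\bigr)=\frac{2}{2^{k}}\Bigl(1-\frac{1}{2^{a_2}}+\frac{1}{2^{a_2+a_3}}-\cdots\Bigr)=\frac{2}{2^{k}}\Bigl(1-\frac{1}{2}\,G(x)\Bigr)=\frac{1}{2^{k-1}}-\frac{1}{2^{k}}\,G(x),$$
where the middle equality is nothing but the definition $G(x)=G([0;a_2,a_3,\ldots])=2\bigl(\frac{1}{2^{a_2}}-\frac{1}{2^{a_2+a_3}}+\cdots\bigr)$. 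Hence $G\circ T_k=f_k\circ G$ with $f_k(t):=2^{1-k}-2^{-k}t$, which is a similarity of $\mathbb{R}$ of ratio $2^{-k}$ (orientation reversing, which is immaterial for dimension) with $f_k([0,1])=[2^{-k},2^{1-k}]$.

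Next I would assemble the self-similar structure of $M$. The function $G$ is continuous on $[0,1]$ (being a singular monotone function, it is in particular continuous), and $E$ is a nonempty compact set---classically, the set of irrationals of $[0,1]$ whose continued-fraction partial quotients all belong to the finite alphabet $\{k_1,\ldots,k_S\}$---so $M=G(E)$ is nonempty and compact. From $E=\bigcup_{j=1}^{S}T_{k_j}(E)$ (the first partial quotient is simply recorded) and the conjugacy,
$$M=G(E)=\bigcup_{j=1}^{S}G\bigl(T_{k_j}(E)\bigr)=\bigcup_{j=1}^{S}f_{k_j}(M),$$
so by the uniqueness of the attractor of an iterated function system, $M$ is exactly the self-similar set generated by $\{f_{k_1},\ldots,f_{k_S}\}$. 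The open set condition holds with $U=(0,1)$: indeed $f_{k_j}(U)=(2^{-k_j},2^{1-k_j})\subset(0,1)=U$ because $k_j\ge 1$, and for $j\ne l$ with, say, $k_j<k_l$ one has $k_l\ge k_j+1$, hence $2^{1-k_l}\le 2^{-k_j}$, so the open intervals $f_{k_j}(U)$ and $f_{k_l}(U)$ are disjoint. Moran's theorem therefore yields $\dim_H M=\alpha_0$ with $\sum_{j=1}^{S}\bigl(2^{-k_j}\bigr)^{\alpha_0}=1$, as claimed (and, taking $\{k_1,\ldots,k_S\}=\{1,\ldots,9\}$, one checks that this $\alpha_0$ exceeds the right-hand side of \eqref{eq: estimations1} and hence differs from $\dim_H E_9$, so $G$ is not a DP-transformation).

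I expect the only genuinely delicate point to be the derivation and the correct bookkeeping of the conjugacy $G\circ T_k=f_k\circ G$: the observation that, although ``the geometry of continued fractions'' is not classically self-similar, the Minkowski function straightens each M\"obius branch $T_{k_j}$ into an honest affine contraction $f_{k_j}$ whose ratio $2^{-k_j}$ depends only on the digit $k_j$. After that, the compactness of $M$, the set equation, the verification of the open set condition, and the appeal to Moran's theorem are all routine.
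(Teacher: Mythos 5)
Your proof is correct, and it reaches the Moran equation by a genuinely different route from the paper. The paper works entirely with diameters: it introduces the ``indicative sets'' $D_n$, computes $\sup$ and $\inf$ of the cylinder sets $\Delta_{k_{j_1}\dots k_{j_n}}$ by summing the extremal alternating series, shows $\lambda(\Delta_{k_{j_1}\dots k_{j_n}})/\lambda(\Delta_{k_{j_1}\dots k_{j_{n-1}}})=2^{-k_{j_n}}$ via the invariance $\lambda(D_{n+1})=\lambda(D_n)$, and then declares $\Delta_0$ a compact self-similar set to which the Moran formula applies. You instead derive the conjugacy $G\circ T_k=f_k\circ G$ with $f_k(t)=2^{1-k}-2^{-k}t$, so that $M=G(E)=\bigcup_j f_{k_j}(M)$ is literally the attractor of an explicit affine IFS, and you verify the open set condition with $U=(0,1)$ before invoking Moran's theorem. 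Your version buys something real: the paper's diameter computations establish only the ratio data (which controls the upper bound for the dimension), and it never checks any separation condition, whereas the lower bound in Moran's theorem requires OSC or an equivalent; your disjointness argument $2^{1-k_l}\le 2^{-k_j}$ for $k_j<k_l$ supplies exactly the missing ingredient, and the conjugacy makes the self-similarity of $G(E)$ a structural fact rather than an observation about diameters. Two small points to tidy: justify the compactness of $E$ (e.g.\ as the nested intersection $\bigcap_n\bigcup T_{k_{j_1}}\circ\cdots\circ T_{k_{j_n}}([0,1])$, since ``monotone'' alone does not give continuity of $G$ --- you need the classical fact that the Minkowski function is continuous, or simply the uniform convergence of the defining series on $E$), and note explicitly that Moran's theorem is indifferent to the orientation-reversing sign of $f_k$, as you already remark. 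Your concluding comparison with the Jarnik--Good--Hensley bounds for $\alpha_0(E_9)$ matches the paper's final deduction that $G$ is not a DP-transformation.
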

\begin{proof}
Let us write by $ \Delta_0=$
$$
 =\left\{y: y=\frac{2}{2^{a_1}}-\frac{2}{2^{a_1+a_2}}+...+\frac{2(-1)^{n-1}}{2^{a_1+a_2+...+a_n}}+...+\frac{2}{2^{a_1+a_2+...+a_n}}\left(\frac{(-1)^{n}}{2^{a_{n+1}}}+\frac{(-1)^{n+1}}{2^{a_{n+1}+a_{n+2}}}+...\right)\right\} 
$$
the set 
$$
 \Delta_0\equiv\left\{y: y=G(x)=\frac{2}{2^{a_1}}-\frac{2}{2^{a_1+a_2}}+...+\frac{2(-1)^{n-1}}{2^{a_1+a_2+...+a_n}}+...\right\}.
$$
Call the set 
$$
D_n\equiv\left\{y: y=\frac{(-1)^{n}}{2^{a_{n+1}}}+\frac{(-1)^{n+1}}{2^{a_{n+1}+a_{n+2}}}+\frac{(-1)^{n+2}}{2^{a_{n+1}+a_{n+2}+a_{n+3}}}+...\right\}
$$
by \emph{an "indicative set" of rank $n$}.

Let $S=2$ and  $k_1<k_2$, then  $\frac{2}{2^{k_1}}>\frac{2}{2^{k_2}}$, 
  $$
\sup\Delta_0={\frac{2}{2^{k_1}}-{\frac{2}{2^{k_1+k_2}}}}+{\frac{2}{2^{k_1+k_2+k_1}}}-{\frac{2}{2^{k_1+k_2+k_1+k_2}}}+...=
$$
$$
=~\frac{{\frac{2}{2^{k_1}}}}{1-\frac{1}{2^{k_1+k_2}}}-\frac{{\frac{2}{2^{{k_1+k_2}}}}}{1-\frac{1}{2^{k_1+k_2}}}=\frac{2(2^{k_2}-1)}{2^{k_1+k_2}-1},
$$
$$
\inf\Delta_0=\{{\frac{2}{2^{k_2}}-{\frac{2}{2^{k_2+k_1}}}}+{\frac{2}{2^{k_2+k_1+k_2}}}-{\frac{2}{2^{k_2+k_1+k_2+k_1}}}+...=
$$
$$
=~\frac{{\frac{2}{2^{k_2}}}}{1-\frac{1}{2^{k_1+k_2}}}-\frac{{\frac{2}{2^{{k_1+k_2}}}}}{1-\frac{1}{2^{k_1+k_2}}}=\frac{2(2^{k_1}-1)}{2^{k_1+k_2}-1}.
$$
So, 
$$
\lambda(\Delta_0)=\frac{2\left(2^{k_2}-2^{k_1}\right)}{2^{k_1+k_2}-1}, 
$$
 where $\lambda(\cdot)$  is a diameter  of a set. 

Consider the following two sets
$$
  \Delta_{k_{j_1}}=\left\{y: y= \frac{2}{2^{k_{j_1}}}-\frac{2}{2^{k_{j_1}+a_2}}+...+\frac{2(-1)^{n-1}}{2^{k_{j_1}+a_2+...+a_n}}+...\right\} 
$$
where $j_1\in\{1,2\}$ and $\{k_1,k_2\}\ni$ $k_{j_1}$ is a fixed for $\Delta_{k_{j_1}}$.
That is
$$
\Delta_{k_1}=\left\{y: y={\frac{2}{2^{k_1}}\left(1-\frac{1}{2^{a_2}}+\frac{1}{2^{a_2+a_3}}-\frac{1}{2^{a_2+a_3+a_4}}+...\right)}\right\}=\left\{\frac{2}{2^{k_1}}-{\frac{2}{2^{k_1}}}{D_1}\right\},
$$
$$
\Delta_{k_2}=\left\{y: y={\frac{2}{2^{k_2}}\left(1-\frac{1}{2^{a_2}}+\frac{1}{2^{a_2+a_3}}-\frac{1}{2^{a_2+a_3+a_4}}+...\right)}\right\}=\left\{\frac{2}{2^{k_2}}-{\frac{2}{2^{k_2}}}{D_1}\right\}.
$$

It is easy to see that
$$
\lambda(\Delta_{k_1})={\frac{2}{2^{k_1}}}\lambda(D_1)={\frac{1}{2^{k_1}}}\lambda(\Delta_0) 
$$
and
$$
 \lambda(\Delta_{k_2})={\frac{2}{2^{k_2}}}\lambda(D_1)={\frac{1}{2^{k_2}}}\lambda(\Delta_0).
$$

In the second step we obtain the following  four sets:  $\Delta_{k_1k_1}, \Delta_{k_1k_2}, \Delta_{k_2k_1}, \Delta_{k_2k_2}$.
In the $n$th step we shall have $2^n$ sets 
$$
\Delta_{k_{j_1}k_{j_2}...k_{j_n}}\equiv \left\{y: y=\sum^{n} _{m=1}{\frac{2(-1)^{m-1}}{2^{k_{j_1}+k_{j_2}+...+k_{j_m}}}}+ \sum^{\infty} _{t=n+1}{\frac{2(-1)^{t-1}}{2^{k_{j_1}+k_{j_2}+...+k_{j_n}+a_{n+1}+...+a_t}}}\right\},
$$ 
where $k_{j_1}, k_{j_2},..., k_{j_n}$ is a fixed tuple of numbers from $\{k_1,k_2\}$, and the following expression is true for all $n\in\mathbb N$.
$$
\frac{\lambda\left(\Delta_{k_{j_1}k_{j_2}...k_{j_{n-1}}k_{j_n}}\right)}{\lambda\left(\Delta_{k_{j_1}k_{j_2}...k_{j_{n-1}}}\right)}
=\frac{1}{2^{k_{j_n}}}\in\left\{\frac{1}{2^{k_1}},\frac{1}{2^{k_2}}\right\},
$$

The last-mentioned fact follows from the next proposition.
\begin{proposition}
  \label{l1}
 The condition 
$$
\frac{\lambda(D_{n+1})}{\lambda(D_{n})}=1
$$
holds for all $n\in \mathbb N$.
\end{proposition}
Really, let  $n=2l$, $ l \in \mathbb N$. Then
$$
D_{n+1}=D_{2l+1}\equiv\left\{y: y=-\frac{1}{2^{a_{2l+2}}}+\frac{1}{2^{a_{2l+2}+a_{2l+3}}}-\frac{1}{2^{a_{2l+2}+a_{2l+3}+a_{2l+4}}}+...\right\},
$$
$$
D_{n}=D_{2l}\equiv\left\{y: y=\frac{1}{2^{a_{2l+1}}}-\frac{1}{2^{a_{2l+1}+a_{2l+2}}}+\frac{1}{2^{a_{2l+1}+a_{2l+2}+a_{2l+3}}}-\frac{1}{2^{a_{2l+1}+a_{2l+2}+a_{2l+3}+a_{2l+4}}}+...\right\}.
$$
$$
\sup(D_{2l+1})=-\frac{\frac{1}{2^{k_2}}}{1-\frac{1}{2^{k_1+k_2}}}+\frac{\frac{1}{2^{k_1+k_2}}}{1-\frac{1}{2^{k_1+k_2}}}=\frac{1-2^{k_1}}{2^{k_1+k_2}-1},
$$
$$
\inf(D_{2l+1})=-\frac{\frac{1}{2^{k_1}}}{1-\frac{1}{2^{k_1+k_2}}}+\frac{\frac{1}{2^{k_1+k_2}}}{1-\frac{1}{2^{k_1+k_2}}}=\frac{1-2^{k_2}}{2^{k_1+k_2}-1},
$$
$$
\lambda(D_{2l+1})=\frac{2^{k_2}-2^{k_1}}{2^{k_1+k_2}-1}.
$$
Similarly,
$$
\sup(D_{2l})=\frac{\frac{1}{2^{k_1}}}{1-\frac{1}{2^{k_1+k_2}}}-\frac{\frac{1}{2^{k_1+k_2}}}{1-\frac{1}{2^{k_1+k_2}}}=\frac{2^{k_2}-1}{2^{k_1+k_2}-1},
$$
$$
\inf(D_{2l})=\frac{\frac{1}{2^{k_2}}}{1-\frac{1}{2^{k_1+k_2}}}-\frac{\frac{1}{2^{k_1+k_2}}}{1-\frac{1}{2^{k_1+k_2}}}=\frac{2^{k_1}-1}{2^{k_1+k_2}-1},
$$
$$
\lambda(D_{2l})=\frac{2^{k_2}-2^{k_1}}{2^{k_1+k_2}-1}.
$$

So, $\lambda(D_{2l+1})=\lambda(D_{2l})$. A proof of the equality is analogical for the case of  $n=2l+1$.

Let $I_{k_{j_1}k_{j_2}...k_{j_{n-1}}k_{j_n}}$ be a segment, whose endpoints coincide with endpoints of the corresponding set $\Delta_{k_{j_1}k_{j_2}...k_{j_{n-1}}k_{j_n}}$.
Since  $\Delta_0\subset I_0$ and $\Delta_0$ is a perfect set and 
$$
\Delta_0=[I_{k_1}\cap\Delta_0]\cup[I_{k_2}\cap\Delta_0], 
$$
$$
[I_{k_1}\cap\Delta_0]\stackrel{2^{-k_1}}{\thicksim}\Delta_0, ~~~[I_{k_2}\cap\Delta_0]\stackrel{2^{-k_2}}{\thicksim}\Delta_0,
$$
the theorem proved for the case of $S=2$.

Let $S>2$ and $ k_1<k_2<...<k_S$.  Similarly, in the $n$th step we shall have the  $S^n$ sets $\Delta_{k_{j_1}k_{j_2}...k_{j_n}}$.
It is easy to see that 
$$
\frac{\lambda(\Delta_{k_{j_1}k_{j_2}...k_{j_{n-1}}k_{j_n}})}{\lambda(\Delta_{k_{j_1}k_{j_2}...k_{j_{n-1}}})}=\frac{1}{2^{k_{j_n}}},
$$
where $k_{j_n} \in \{k_1, k_2,..., k_S\}, S\in \mathbb N, S>1$. 

Since the set $\Delta_0$ is a compact self-similar set of the space $\mathbb R^1$, the Hausdorff-Besicovitch dimension $\alpha_0(\Delta_0)$ of the set $\Delta_0$ is calculating by the formula:
$$
\left({\frac{1}{2^{k_1}}}\right)^{\alpha_0}+\left({\frac{1}{2^{k_2}}}\right)^{\alpha_0}+\left({\frac{1}{2^{k_3}}}\right)^{\alpha_0}+...+\left({\frac{1}{2^{k_S}}}\right)^{\alpha_0}=1.
$$

The theorem is proved. \end{proof}

So, for $k_1=1, k_2=2, ..., k_9=9 $  we obtain that 
\begin{equation}
\label{eq: formula}
\alpha_0(G(E_9))\approx 0,9985778625536. 
\end{equation}

From \eqref{eq: estimations1} and \eqref{eq: formula} it follows that $\alpha_0(E_9)\ne \alpha_0(G(E_9))$. 
So, the Minkowski function does not preserve the Hausdorff-Besicovitch dimension.

\end{document}